\newtheorem{theorem}{Theorem}
\newtheorem{proposition}[theorem]{Proposition}
\newtheorem{coro}[theorem]{Corollary}
\def\acknowledgment{\par\addvspace{17pt}\small\rmfamily
\trivlist\if!\ackname!\item[]\else
\item[\hskip\labelsep
{\bfseries\ackname}]\fi}
\def\C{\mathbb{C}}
\def\D{\mathbb{D}}
\newcommand{\cO}{\mathcal{O}}
\newcommand{\clD}{{\overline{\,\D}}}
\begin{document}
\title[On Poletsky theory of discs in compact manifolds]{On Poletsky theory of discs in compact manifolds} 
\author{Uro\v s Kuzman }

%
\subjclass[2010]{32Q60,32Q65,32E30,32U05}

\begin{abstract}
We provide a direct construction of Poletsky discs via local arc approximation and a Runge-type theorem by A.\ Gournay \cite{Gournay}.
\end{abstract} 
\maketitle
Let $\D=\{\zeta\in \C\colon |\zeta|< 1\}$ denote the open unit disc. Given a smooth (almost) complex manifold $M$ and $p\in M$, 
we denote by $\cO(\clD,M,p)$ 
the set of smooth maps $u\colon \clD \to M$ that are (pseudo)holomorphic in some neighborhood of $\clD$ and satisfy $u(0)=p$. Given $p\in M$, $\epsilon>0$ and an open set $U\subset M$ we call an element of $u\in\cO(\clD,M,p)$ a \textit{Poletsky disc} (associated to $p$, $\epsilon$ and $U$) if most of its boundary lies in $U$, t.i., there exists an exceptional set $E\subset[0,2\pi)$ of Lebesgue measure $|E|<\epsilon$ and such that $u(e^{it})\in U$ for $t\notin E$. Such discs were used by E.\ Poletsky \cite{Poletsky1991} in order to characterize the polynomial hull for compact sets in $\mathbb{C}^n$. Similarly, they can describe the projective hull of a compact set in complex projective spaces \cite{BDD, MAG}.  

All of the above mentioned characterizations are based on the following explicit formula for the largest plurisubharmonic minorant of a given upper-semicontinuous function $f$ on $M$:
\begin{eqnarray}\label{Poletsky}\hat{f}(p)=\inf\left\{\int_0^{2\pi}f\circ u(e^{it})\frac{dt}{2\pi}:\;\; u\in\cO(\clD,M,p) \right\}.\end{eqnarray}
The latter was proved to be valid on any complex manifold by J.\ P.\ Rosay (see \cite{LS} for related results), who also observed that if $M$ admits no non-constant bounded plurisubharmonic function, there exists a Poletsky disc for any $p\in M$, $\epsilon>0$ and an open set $U\subset M$ \cite[Corollary 0.2]{Rosay1}. Indeed, in this case the minorant of the negative indicator function $f=-\chi_U$ equals to $\hat{f}\equiv -1$, hence the existence of the desired discs follows directly from the definition of the infimum in (1). 

In this paper we present a new, direct proof of this corollary valid for a certain class of  manifolds admitting a Runge-type approximation provided by A. Gournay \cite{Gournay}. In particular, we give a partial answer to Rosay's question raised in \cite[Section $5$]{Rosay1}: given a compact complex manifold can a Poletsky disc be provided without using $(1)$? Moreover, our theorem includes also some examples of almost complex manifolds. 
\begin{theorem}
Let $M$ be a smooth, connected compact manifold equipped with a regular almost complex structure and admitting a doubly tangent property. Given a point $p\in M$, a positive constant $\epsilon>0$ and any open set $U\subset M$, there exist a disc $u\in\cO(\clD,M,p)$ and a set $E\subset[0,2\pi)$ such that $|E|<\epsilon$ and $u(e^{it})\in U$ for $t\notin E$.
\end{theorem}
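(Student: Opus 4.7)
The plan is to build a Poletsky disc in two stages: first construct a smooth, piecewise $J$-holomorphic \emph{model} map $\phi\colon\clD\to M$ whose boundary parametrizes $U$ except on a small arc that detours back to $p$, then upgrade $\phi$ to a genuine element of $\cO(\clD,M,p)$ by applying Gournay's Runge-type theorem.

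First I would exploit regularity of $J$ (via Nijenhuis--Woolf) to pick a non-degenerate $J$-holomorphic disc $v\colon\clD\to M$ whose image lies in $U$, so that in particular $v(\partial\clD)\subset U$. Next, using path-connectedness of $M$ together with the doubly tangent property, I would construct a finite chain of $J$-holomorphic discs $w_1,\dots,w_N$ linking $p=w_1(0)$ to $v(0)=w_N(1)$; the doubly tangent property is applied at each junction to splice consecutive discs tangentially at their common point, producing a piecewise smooth, piecewise $J$-holomorphic map from a tree of closed discs into $M$.

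The next step is to assemble the model map $\phi\colon\clD\to M$: split $\partial\clD$ into a short arc $I$ of length $<\epsilon/2$ and its long complement $I'$; map $I'$ to a curve that winds many times around $v(\partial\clD)\subset U$; map $I$ together with a thin corridor reaching the center to the tangentially joined chain $w_1,\dots,w_N$, so that $\phi(0)=p$; and fill in the interior using open portions of the discs $v, w_1,\dots,w_N$, the gluings once again being furnished by the doubly tangent property. By construction $\phi$ is $J$-holomorphic in a neighborhood of $0$ and in a neighborhood of $I'$, and $\phi(e^{it})\in U$ for all $t$ outside a set of measure less than $\epsilon/2$.

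Finally I would invoke Gournay's Runge-type theorem to approximate $\phi$ uniformly on $\clD$ by $u\in\cO(\clD,M,p)$ with $\sup_{\clD}\dist(u,\phi)<\delta$. Since $U$ is open, for $\delta$ small enough the closeness forces $u(e^{it})\in U$ for $t$ outside a set $E$ of measure less than $\epsilon$, as required. The main obstacle is checking the hypotheses of Gournay's theorem for the assembled $\phi$: one must arrange the ``Runge data'' (the compact set on which $\phi$ is already $J$-holomorphic, together with its complement in $\clD$) so that the approximation preserves both the basepoint condition $u(0)=p$ and the property that most of $\partial\clD$ maps into $U$. This amounts to a careful choice of a compact neighborhood of $\{0\}\cup I'$ inside $\clD$ on which $\phi$ is genuinely $J$-holomorphic and which is compatible with the Runge-type approximation scheme.
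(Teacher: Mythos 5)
Your two-stage skeleton (build a model map that is $J$-holomorphic near the basepoint and near most of the boundary, then invoke Gournay's Runge-type theorem) is the same strategy as the paper's, but several of your intermediate steps do not work as described. The most serious problem is your use of the \emph{doubly tangent property} as a splicing device: that hypothesis asserts the existence, for generic $q\in M$ and $a,b$, of a $J$-holomorphic sphere with prescribed local Laurent expansion $az+br^2/z+O(r^{1+\epsilon})$, and it is consumed \emph{inside} Gournay's grafting procedure; it is not a tool for joining two $J$-holomorphic discs ``tangentially at their common point'' into a new $J$-holomorphic object, and a map assembled from disc pieces glued along seams is not $J$-holomorphic across the seams. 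Your chain $w_1,\dots,w_N$ and the ``filled-in interior'' therefore cannot be taken piecewise $J$-holomorphic in the way you claim. Fortunately this is also unnecessary: the Runge-type theorem only requires the model $\varphi$ to be $J$-holomorphic \emph{near the compact set $K$ on which one approximates} (here a neighborhood of $\{0\}\cup I'$) and merely \emph{continuous} elsewhere, so the corridor from the boundary back to $p$ only needs to be a continuous extension, which exists because $M$ is connected and the relevant boundary loops bound discs. Relatedly, saying you ``map $I'$ to a curve winding around $v(\partial\clD)$'' does not produce a map that is $J$-holomorphic on a two-dimensional neighborhood of $I'$; you must either restrict a genuine $J$-holomorphic disc with image in $U$ to an annular neighborhood of the boundary, or (as the paper does, in order to prove the stronger statement with a prescribed boundary curve $\lambda$) invoke a Mergelyan-type \emph{arc approximation} theorem producing a $J$-holomorphic map on a shrinking neighborhood of each boundary arc. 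The paper's entire Section 1 exists to supply that ingredient, and your proposal has no substitute for it.

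The second genuine gap is the basepoint condition. Gournay's theorem, applied with $0\in K$, only yields $\dist(u(0),p)<\delta$, not $u(0)=p$, and you cannot repair this afterwards by a reparametrization in a general almost complex $M$. The paper needs a dedicated corollary in which Gournay's proof is modified: an extra graft is inserted at the center (harmless there since $\bar{\partial}_J\varphi(0)=0$ forces $b=0$) and the pointwise constraint $\xi(0)=0$ is imposed when solving the linearized equation on that graft, using regularity of $J$ to keep the relevant operator surjective. You flag the basepoint as an ``obstacle'' but offer no mechanism, and your earlier assertion that Gournay's theorem approximates $\varphi$ \emph{uniformly on} $\clD$ is also false --- the approximation holds only in $\mathcal{C}^0(K)$ for a compact $K$ contained in the open set where $\varphi$ is already $J$-holomorphic (away from $K$ the grafting deliberately changes $\varphi$ drastically). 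With the model map corrected as above, the restriction of the estimate to $K\supset I'$ still suffices to conclude $u(e^{it})\in U$ off a set of measure $<\epsilon$, but the argument as written does not get there.
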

\noindent The assumptions in the above theorem are rather technical and should be read as 'such that Gournay's approximation result applies' (we further discuss them in \S 2). However, they are fulfilled for a wide class of compact complex manifolds including complex projective spaces and Grassmannians. Moreover, as mentioned, they are valid for some manifolds equipped with a non-integrable almost complex structure $J$ (e.g. $\mathbb{C}P^n$ with $J$ tamed by the standard symplectic form) \cite[p.313]{Gournay}. Note that for latter the Poletsky-Rosay formula (1) is proved only in a low dimensional case \cite{KUZMAN3}. Hence for $\dim_{\mathbb{R}}M\geq 6$ and a non-integrable $J$, the theorem is new.

Finally, let us remark that in \S 1 we present another original statement that will be needed in the proof of the main theorem: based on \cite{BK} we provide a Mergelyan-type result for maps defined on smooth arcs (Theorem 5).         

\section{The local arc approximation}
Let $M$ be a smooth real manifold of even dimension. A $\left(1,1\right)$-tensor field $J\colon TM\to TM$ satisfying $J^{2}=-Id$ is called {\it almost complex structure}. A differentiable map $u:\left(M',J'\right) \longrightarrow \left(M,J\right)$ between two almost complex manifolds 
is {\it $\left(J',J\right)$-holomorphic} if for every $p \in M'$ we have
\begin{eqnarray}\label{J-holomorphicity condition}
J\left(u\left(p\right)\right)\circ d_{p}u=d_{p}u\circ J'\left(p\right),
\end{eqnarray}
We deal with two simplest cases, {\it $J$-holomorphic discs} $u\colon \mathbb{D}\to M$ and {\it $J$-holomorphic spheres} $u\colon\mathbb{C}P^1\to M$. 

We denote by $J_{st}$ the standard integrable structure on $\C^{n}$ for any $n\in\mathbb{N}$. In local coordinates $z\in\mathbb{R}^{2n}$ an almost complex structure $J$ is represented by a $\mathbb{R}$-linear operator satisfying $J(z)^2=-I$, hence (2) equals to
\begin{eqnarray}\label{J-hol}u_y=J(u)u_x.\end{eqnarray}
Further, if $J+J_{st}$ is invertible along $u$ we have
\begin{eqnarray}\label{A}\mathcal{F}(u)=u_{\bar{\zeta}}+A(u)\overline{u_{\zeta}}=0,\end{eqnarray}
where $\zeta=x+iy$ and $A(z)(v)=(J_{st}+J(z))^{-1}(J(z)-J_{st})(\bar{v})$
is a complex linear endomorphism for every $z\in\C^n$. We call $A$ the \emph{complex matrix of} $J$ and denote by $\mathcal{J}$ the set of all smooth structures on $\mathbb{R}^{2n}$ satisfying the condition $\det (J+J_{st})\neq 0$.  

In \cite{BK} the approximation theory was developed for the operator $\mathcal{F}$ defined as in (\ref{A}) and evaluated in functions admitting a Sobolev weak derivative. In particular, given $\varphi\in W^{1,p}(\mathbb{D})$, $p>2$, a bounded right inverse $Q_\varphi$ was constructed for the derivative $d_\varphi\mathcal{F}$ and the following version of the Implicit Function Theorem was applied. 
\begin{theorem}[Implicit Function Theorem]
\label{theoimpl}
Let $X$ and $Y$ be two Banach spaces and consider a  
map $\mathcal{F}: U\subset X \to Y$ of class $\mathcal{C}^1$ defined on an open set $U\subset X$. 
Let $x_0\in U$. Assume that the differential $d_{x_0}\mathcal{F}$ admits a bounded right inverse, denoted by $Q_{x_0}$. Fix $c_0>0$ such that $\left\|Q\right\|\leq c_0$ and $\eta>0$ such that if $\left\|x-x_0\right\|<\eta$ then $x\in U$ and $$\left\|d_x\mathcal{F}-d_{x_0}\mathcal{F}\right\|\leq \frac{1}{2c_0}.$$
Then if $\displaystyle \left\|\mathcal{F}(x_0)\right\|<\frac{\eta}{4c_0}$ there exists $x\in U$ such that $\mathcal{F}(x)=0$ and 
$$\left\|x-x_0\right\|\leq2c_0\left\|\mathcal{F}(x_0)\right\|.$$
\end{theorem}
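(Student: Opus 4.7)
The plan is to reduce the problem to a Banach fixed-point argument in the target space $Y$, using the right-inverse $Q_{x_0}$ to parametrize candidate solutions in a way that cancels the linear part. Concretely, I would look for a zero of $\mathcal{F}$ in the affine subspace $x_0 + Q_{x_0}(Y)\subset X$, writing $x = x_0 - Q_{x_0}(y)$ with $y\in Y$, and introduce the auxiliary map
\begin{equation*}
T\colon Y\to Y, \qquad T(y) = y + \mathcal{F}\bigl(x_0 - Q_{x_0}(y)\bigr).
\end{equation*}
A fixed point of $T$ is exactly a solution of $\mathcal{F}\bigl(x_0-Q_{x_0}(y)\bigr)=0$, and the starting value is $T(0)=\mathcal{F}(x_0)$, which is small by hypothesis.

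The key computation is the differential identity
\begin{equation*}
d_yT = I - d_{x_0-Q_{x_0}(y)}\mathcal{F}\circ Q_{x_0} = \bigl(d_{x_0}\mathcal{F} - d_{x_0-Q_{x_0}(y)}\mathcal{F}\bigr)\circ Q_{x_0},
\end{equation*}
where the second equality uses that $Q_{x_0}$ is a right inverse, i.e.\ $d_{x_0}\mathcal{F}\circ Q_{x_0}=I_Y$. In particular $d_0T=0$, which is what makes the scheme contract. Combining $\|Q_{x_0}\|\leq c_0$ with the assumed estimate $\|d_x\mathcal{F}-d_{x_0}\mathcal{F}\|\leq 1/(2c_0)$ for $\|x-x_0\|<\eta$, I obtain $\|d_yT\|\leq 1/2$ whenever $\|Q_{x_0}(y)\|<\eta$, which is ensured by the slightly stronger condition $\|y\|<\eta/c_0$.

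I would then apply the Banach contraction principle on the closed ball $\overline{B}_Y(0,r)$ with $r=\eta/(2c_0)$. The mean value inequality gives $\|T(y)-T(y')\|\leq \tfrac12\|y-y'\|$ on this ball, and the self-map property follows from
\begin{equation*}
\|T(y)\|\leq \|T(0)\|+\tfrac12\|y\| < \tfrac{\eta}{4c_0}+\tfrac{r}{2} = r,
\end{equation*}
using the hypothesis $\|\mathcal{F}(x_0)\|<\eta/(4c_0)$. The unique fixed point $y^{*}\in \overline{B}_Y(0,r)$ produces $x = x_0 - Q_{x_0}(y^{*})\in U$ with $\mathcal{F}(x)=0$. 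The quantitative bound comes from the standard telescoping estimate applied to the Picard iterates $y_0=0$, $y_{n+1}=T(y_n)$:
\begin{equation*}
\|y^{*}\|\leq \sum_{n\geq 0}\|y_{n+1}-y_n\|\leq \sum_{n\geq 0}\bigl(\tfrac12\bigr)^n\|T(0)\|=2\|\mathcal{F}(x_0)\|,
\end{equation*}
hence $\|x-x_0\|=\|Q_{x_0}(y^{*})\|\leq 2c_0\|\mathcal{F}(x_0)\|$.

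The only real idea is the opening move of parametrizing by the image of $Q_{x_0}$: this is what turns the non-injectivity of $d_{x_0}\mathcal{F}$ into a non-issue and yields a contraction whose derivative vanishes at the base point. Everything after that is routine, with the main bookkeeping point being to keep the two small quantities $\eta$ and $\|\mathcal{F}(x_0)\|$ coupled so that the contraction ball of radius $r=\eta/(2c_0)$ both sits inside $U$ (via $\|Q_{x_0}(y)\|\leq \eta/2<\eta$) and is stable under $T$; this is precisely what forces the constants $\eta/(2c_0)$ and $\eta/(4c_0)$ that appear in the statement.
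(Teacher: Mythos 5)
Your argument is correct: the contraction $T(y)=y+\mathcal{F}(x_0-Q_{x_0}(y))$ on the closed ball of radius $\eta/(2c_0)$ is well defined (since $\|Q_{x_0}(y)\|\leq\eta/2<\eta$ keeps $x_0-Q_{x_0}(y)$ in $U$), the derivative identity and the bound $\|d_yT\|\leq 1/2$ are right, and the telescoping estimate gives exactly the claimed $2c_0\|\mathcal{F}(x_0)\|$ bound. The paper itself gives no proof of this statement --- it is quoted from \cite{BK} --- but your Newton--Picard scheme parametrized by the image of the right inverse is the standard argument behind such quantitative implicit function theorems, so there is nothing to flag.
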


In this paper we use the same approach to develop a similar statement for the operator $\mathcal{F}\colon\mathcal{C}^{1,\alpha}(\overline{\mathbb{D}})\to \mathcal{C}^{0,\alpha}(\overline{\mathbb{D}})$. That is, we prove an analogue of \cite[Theorem 5]{BK} valid for H\"older spaces (we omit the existence part for $Q_\varphi$ since the proof is the same as in \cite[Theorem 2]{BK} and \cite{ST1}). Note that these additional regularity conditions are needed since in the present paper we apply the statement to shrinking neighborhoods of an arc in order to obtain a $\mathcal{C}^0$-approximation. Since the diameter of such sets admits no lower bound, the $W^{1,p}$-result along with the Sobolev embedding theorem does not suffice.          

\begin{theorem}\label{theoap}
Let $0<\alpha<1$. Let $J\in \mathcal{J}$ and let $A$ be its complex matrix. We define $\mathcal{F}:\mathcal{C}^{1,\alpha}(\overline{\mathbb{D}}) \to \mathcal{C}^{0,\alpha}(\overline{\mathbb{D}})$ to be the operator given by 
$$\mathcal{F}(u)=\displaystyle u_{\bar{\zeta}}+A(u)\overline{u_{\zeta}}.$$
For every $c_0>0$, there exists $\delta>0$ such that for any $\varphi \in \mathcal{C}^{1,\alpha}(\overline{\mathbb{D}})$ satisfying 
$$\|\varphi\|_{\mathcal{C}^{1,\alpha}(\overline{\mathbb{D}})}\leq c_0,\;\; \|Q_\varphi\|\leq c_0,\;\;  \left\|\mathcal{F}(\varphi)\right\|_{\mathcal{C}^{0,\alpha}(\overline{\mathbb{D}})}<\delta,$$
there exists a $J$-holomorphic disc $u\in \mathcal{C}^{1,\alpha}(\overline{\mathbb{D}})$  such that 
$$\left\|u-\varphi\right\|_{\mathcal{C}^{1,\alpha}(\overline{\mathbb{D}})}\leq 2c_0\left\|\mathcal{F}(\varphi)\right\|_{\mathcal{C}^{0,\alpha}(\overline{\mathbb{D}})}.$$ 
\end{theorem}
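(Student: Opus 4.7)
The strategy is to apply Theorem \ref{theoimpl} directly with $X=\mathcal{C}^{1,\alpha}(\overline{\mathbb{D}})$, $Y=\mathcal{C}^{0,\alpha}(\overline{\mathbb{D}})$ and base point $x_0=\varphi$. Since the bounded right inverse $Q_\varphi$ together with its norm bound is part of the hypothesis, only two points must be verified: (i) that $\mathcal{F}$ is of class $\mathcal{C}^1$ between these Banach spaces, and (ii) that there exists $\eta>0$, depending only on $c_0$, such that $\|u-\varphi\|_{\mathcal{C}^{1,\alpha}}<\eta$ implies $\|d_u\mathcal{F}-d_\varphi\mathcal{F}\|\leq \tfrac{1}{2c_0}$. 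Then setting $\delta:=\eta/(4c_0)$ and applying Theorem \ref{theoimpl} yields both the $J$-holomorphic disc $u$ and the estimate $\|u-\varphi\|_{\mathcal{C}^{1,\alpha}}\leq 2c_0\|\mathcal{F}(\varphi)\|_{\mathcal{C}^{0,\alpha}}$.

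For (i), note that $u\mapsto(u_\zeta,u_{\bar\zeta})$ is bounded linear from $\mathcal{C}^{1,\alpha}(\overline{\mathbb{D}})$ to $\mathcal{C}^{0,\alpha}(\overline{\mathbb{D}})$, and that $u\mapsto A\circ u$ is of class $\mathcal{C}^1$ from $\mathcal{C}^{1,\alpha}(\overline{\mathbb{D}})$ to itself because $A$ is smooth and the usual substitution/composition estimates hold in Hölder classes (using that $\mathcal{C}^{0,\alpha}$ is a Banach algebra under pointwise multiplication). Taking the product of these two ingredients yields that $\mathcal{F}$ is $\mathcal{C}^1$ with differential
\[
d_\varphi\mathcal{F}(v)=v_{\bar\zeta}+A(\varphi)\,\overline{v_\zeta}+dA(\varphi)(v)\,\overline{\varphi_\zeta}.
\]

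The bulk of the work is (ii). Writing
\[
d_u\mathcal{F}(v)-d_\varphi\mathcal{F}(v)=\bigl(A(u)-A(\varphi)\bigr)\overline{v_\zeta}+\bigl(dA(u)(v)-dA(\varphi)(v)\bigr)\overline{\varphi_\zeta}+dA(u)(v)\bigl(\overline{u_\zeta}-\overline{\varphi_\zeta}\bigr),
\]
I control each of the three terms in the $\mathcal{C}^{0,\alpha}$ norm by $\|v\|_{\mathcal{C}^{1,\alpha}}$ times a factor vanishing with $\|u-\varphi\|_{\mathcal{C}^{1,\alpha}}$. Since $\|\varphi\|_{\mathcal{C}^{1,\alpha}}\leq c_0$ and we only look at $u$ with $\|u-\varphi\|_{\mathcal{C}^{1,\alpha}}<1$, the images of $u$ and $\varphi$ lie in a common ball of $\mathbb{C}^n$ whose radius depends only on $c_0$; on this ball $A$, $dA$ and a few higher derivatives are uniformly bounded and Lipschitz. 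Combined with the algebra inequality for $\mathcal{C}^{0,\alpha}$ and the standard substitution estimate $\|A(u)-A(\varphi)\|_{\mathcal{C}^{0,\alpha}}\leq C(c_0)\|u-\varphi\|_{\mathcal{C}^{0,\alpha}}$ (and similarly for $dA$), each of the three terms is bounded by $C(c_0)\|u-\varphi\|_{\mathcal{C}^{1,\alpha}}\|v\|_{\mathcal{C}^{1,\alpha}}$. Choosing $\eta=\eta(c_0)$ with $C(c_0)\eta\leq\tfrac{1}{2c_0}$ then closes the argument.

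The main obstacle is that the Lipschitz estimate in (ii) must be uniform in $\varphi$: the constant may depend only on $c_0$, not on the specific function. This forces the calculus of substitution to be carried out on a fixed bounded set of $\mathbb{C}^n$, where one may without loss of generality replace $A$ by a smooth, globally bounded cut-off whose derivatives are globally controlled. Once this uniform Hölder calculus is in place --- mirroring the Sobolev computations of \cite{BK} but with the embedding $\mathcal{C}^{1,\alpha}\hookrightarrow\mathcal{C}^{0}$ replacing the Sobolev embedding used there --- the conclusion is obtained verbatim from Theorem \ref{theoimpl}.
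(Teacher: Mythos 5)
Your proposal is correct and follows essentially the same route as the paper: both reduce the statement to the quantitative Implicit Function Theorem (Theorem \ref{theoimpl}) by establishing a local Lipschitz bound $\left\|d_{u}\mathcal{F}-d_{\varphi}\mathcal{F}\right\|\leq C(c_0)\left\|u-\varphi\right\|_{\mathcal{C}^{1,\alpha}}$ via the same three-term telescoping of the difference of differentials (your splitting differs from the paper's only in which factor is frozen in the middle term), and then choose $\eta$ and $\delta=\eta/(4c_0)$ identically. Your explicit attention to the uniformity of the constant in $\varphi$, via the Banach algebra and substitution estimates for $\mathcal{C}^{0,\alpha}$, is if anything slightly more careful than the paper's appeal to the embedding $\mathcal{C}^{1}(\overline{\mathbb{D}})\subset\mathcal{C}^{0,\alpha}(\overline{\mathbb{D}})$.
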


\begin{proof}
The key step is to prove that the derivative of $\mathcal{F}$ is locally Lipschitz under present assumptions. That is, there exists $c>0$ such that 
$$\left\|d_{\tilde{\varphi}}\mathcal{F}-d_{\varphi}\mathcal{F}\right\|\leq c\|\tilde{\varphi}-\varphi\|_{\mathcal{C}^{1,\alpha}(\overline{\mathbb{D}})}$$ for any $\tilde{\varphi} \in \mathcal{C}^{1,\alpha}(\overline{\mathbb{D}})$ in a $\mathcal{C}^{1,\alpha}$-neighborhood of $\varphi$, say $\displaystyle \|\tilde{\varphi}-\varphi\|_{\mathcal{C}^{1,\alpha}(\overline{\mathbb{D}})}< 1$. If such a statement is valid then one can simply set 
$$\eta=\min\left\{1,\frac{1}{2cc_0}\right\}\;\;\textrm{ and }\;\; \displaystyle \delta=\frac{\eta}{4c_0}$$ and apply the Theorem $2$ to $X= \mathcal{C}^{1,\alpha}(\overline{\mathbb{D}})$, $Y=\mathcal{C}^{0,\alpha}(\overline{\mathbb{D}})$ and $x_0=\varphi.$

Let $h \in \mathcal{C}^{1,\alpha}(\overline{\mathbb{D}})$ and let $\left\|\varphi\right\|_{\mathcal{C}^{1,\alpha}(\overline{\mathbb{D}})}<c_0$. We need to prove that 
\begin{equation}\label{eqlip}
\displaystyle \left\|d_{\tilde{\varphi}}\mathcal{F}(h)-d_{\varphi}\mathcal{F}(h)\right\|_{\mathcal{C}^{0,\alpha}(\overline{\mathbb{D}})}\leq c\|\tilde{\varphi}-\varphi\|_{\mathcal{C}^{1,\alpha}(\overline{\mathbb{D}})}\|h\|_{\mathcal{C}^{1,\alpha}(\overline{\mathbb{D}})}.
\end{equation}
Note that    
$$ 
d_{\varphi}\mathcal{F}(h) =  h_{\bar \zeta}+A(\varphi)\overline{h_{\zeta}}+d_\varphi A ( h)  \ \overline{\varphi_\zeta},
$$
where $\displaystyle d_\varphi A  (h)=\sum_{j=1}^n\frac{\partial A}{\partial z_j}(\varphi)h_j+\frac{\partial A}{\partial \bar{z}_j}(\varphi)\bar{h}_j$.
We write $$d_{\tilde{\varphi}}\mathcal{F}(h)-d_{\varphi}\mathcal{F}(h)=I+II+III,$$ where 
$$
\left\{
\begin{array}{lll} 
I&= &\left(A(\tilde{\varphi})-A(\varphi)\right)\overline{h_{\zeta}},\\
II&=&(d_{\tilde{\varphi}} A-d_\varphi A)(h)\overline{\tilde{\varphi}_\zeta},\\
III &= &d_{\varphi} A  (h)\left(\overline{\tilde{\varphi}_\zeta-\varphi_\zeta}\right).
\end{array}
\right.$$                                                                                                                                 
Let us estimate each of the three parts. 

Since $\mathbb{D}$ is convex and bounded the following embeddings are compact
$$\mathcal{C}^{1,\alpha}(\overline{\mathbb{D}})\subset\mathcal{C}^{1}(\overline{\mathbb{D}})\subset\mathcal{C}^{0,\alpha}(\overline{\mathbb{D}}).$$
Therefore, one can bound the $\mathcal{C}^{0,\alpha}$-norm of the entries in $A(\tilde{\varphi})-A(\varphi)$ by their  $\mathcal{C}^1$-norm. This implies existence of a constant $c_1>0$ depending on $\left\|\tilde{\varphi}\right\|_{\mathcal{C}^{1}(\overline{\mathbb{D}})}<1+c_0$ such that
$$\left\|I\right\|_{\mathcal{C}^{0,\alpha}(\overline{\mathbb{D}})}\leq c_1 \|\tilde{\varphi}-\varphi\|_{\mathcal{C}^{1,\alpha}(\overline{\mathbb{D}})}\|h\|_{\mathcal{C}^{1,\alpha}(\overline{\mathbb{D}})}.$$
By a similar argument for $\frac{\partial A}{\partial z_j}(\tilde{\varphi})-\frac{\partial A}{\partial z_j}(\varphi)$ and $\frac{\partial A}{\partial \bar{z}_j}(\tilde{\varphi})-\frac{\partial A}{\partial \bar{z}_j}(\varphi)$ we get
$$\left\|II\right\|_{\mathcal{C}^{0,\alpha}(\overline{\mathbb{D}}}
\leq c_0c_2  \|\tilde{\varphi}-\varphi\|_{\mathcal{C}^{1,\alpha}(\overline{\mathbb{D}})}\|h\|_{\mathcal{C}^{1,\alpha}(\overline{\mathbb{D}})}.$$
Finally, we have
$$\left\|III\right\|_{\mathcal{C}^{0,\alpha}(\overline{\mathbb{D}})}\leq c_3 \|\tilde{\varphi}-\varphi\|_{\mathcal{C}^{1,\alpha}(\overline{\mathbb{D}})}\|h\|_{\mathcal{C}^{1,\alpha}(\overline{\mathbb{D}})},$$
where $c_3>0$ depends on the $\mathcal{C}^{0,\alpha}$-norm evaluated on the coefficients of $\frac{\partial A}{\partial z_j}(\varphi)$ and $\frac{\partial A}{\partial \bar{z}_j}(\varphi).$ The latter may be bounded by a constant times $c_0$. 
\end{proof}

As stated above we will apply this approximation result to a shrinking family of sets in $\mathbb{D}$. Let $\left\{\Omega_m\right\}_{m\in\mathbb{N}}$ be such a family. Throughout the rest of this section we use the following convention: Given $\varphi\in\mathcal{C}^{1,\alpha}(\overline{\mathbb{D}})$ we denote by $\varphi_m$ its restriction to the set $\Omega_m$ and by $\mathcal{F}_m$ the corresponding operator defined as in (\ref{A}) but mapping from $\mathcal{C}^{1,\alpha}(\overline{\Omega}_m)$ to $\mathcal{C}^{0,\alpha}(\overline{\Omega}_m)$. 

\begin{proposition}
Let $\left\{\Omega_m\right\}_{m\in\mathbb{N}}$ be a shrinking family of sets in $\mathbb{D}$ and let  $\varphi\in\mathcal{C}^{1,\alpha}(\overline{\mathbb{D}})$ be a map satisfying the limit condition
$$\lim_{m\to\infty}\left\|\mathcal{F}_{m}(\varphi_{m})\right\|_{\mathcal{C}^{0,\alpha}(\overline{\Omega}_m)}=0.$$
Then for every $m\in\mathbb{N}$ large enough, there exists a $J$-holomorphic map $u_m\colon \Omega_{m}\to\mathbb{R}^{2n}$ such that $$\lim_{m\to\infty}\left\|u_m-\varphi_{m}\right\|_{\mathcal{C}^{1,\alpha}(\overline{\Omega}_m)}=0.$$  
\end{proposition}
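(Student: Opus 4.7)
The plan is to apply Theorem \ref{theoap} with $\mathbb{D}$ replaced by $\Omega_m$ and $\varphi$ replaced by $\varphi_m$, for each $m$ sufficiently large. To do so I first need to extract uniform constants: note that since $\Omega_m\subset\mathbb{D}$, the restriction is contractive, so
$$\|\varphi_m\|_{\mathcal{C}^{1,\alpha}(\overline{\Omega}_m)}\leq \|\varphi\|_{\mathcal{C}^{1,\alpha}(\overline{\mathbb{D}})}=:c_0'$$
for every $m$. Moreover, the construction of the right inverse $Q_{\varphi_m}$ recalled from \cite{BK,ST1} is a local perturbation of the Cauchy--Green solid integral operator and yields a bound $\|Q_{\varphi_m}\|\leq c_0''$ depending only on $c_0'$ and on $\|A\|_{\mathcal{C}^1}$ along the image of $\varphi$. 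Set $c_0:=\max\{c_0',c_0''\}$.

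Next I need to verify that the proof of Theorem \ref{theoap} carries over verbatim to each operator $\mathcal{F}_m\colon \mathcal{C}^{1,\alpha}(\overline{\Omega}_m)\to\mathcal{C}^{0,\alpha}(\overline{\Omega}_m)$. Inspecting that proof, the only domain-dependent ingredients are the compact Sobolev-type embeddings $\mathcal{C}^{1,\alpha}\subset\mathcal{C}^1\subset\mathcal{C}^{0,\alpha}$ and the bound of the derivative of $A$ on the image of $\varphi_m$. Both are controlled on any convex bounded subdomain of $\mathbb{D}$ with constants that can be taken uniform in $m$ (one can first extend each $\varphi_m$ to $\overline{\mathbb{D}}$ by $\varphi$ itself, so the norms of all relevant quantities are dominated by those computed on $\overline{\mathbb{D}}$). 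Hence the Lipschitz constant $c$ for $d\mathcal{F}_m$ in a $\mathcal{C}^{1,\alpha}$-neighborhood of $\varphi_m$ is bounded uniformly in $m$, and so is the threshold $\delta=\delta(c_0)>0$ produced by Theorem \ref{theoap}.

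With uniform $c_0$ and $\delta$ in hand, the hypothesis
$$\lim_{m\to\infty}\|\mathcal{F}_m(\varphi_m)\|_{\mathcal{C}^{0,\alpha}(\overline{\Omega}_m)}=0$$
guarantees that for all $m$ large enough one has $\|\mathcal{F}_m(\varphi_m)\|_{\mathcal{C}^{0,\alpha}(\overline{\Omega}_m)}<\delta$. Applying Theorem \ref{theoap} to $\mathcal{F}_m$ at the base point $\varphi_m$ then produces a $J$-holomorphic map $u_m\in\mathcal{C}^{1,\alpha}(\overline{\Omega}_m)$ satisfying
$$\|u_m-\varphi_m\|_{\mathcal{C}^{1,\alpha}(\overline{\Omega}_m)}\leq 2c_0\,\|\mathcal{F}_m(\varphi_m)\|_{\mathcal{C}^{0,\alpha}(\overline{\Omega}_m)},$$
and the right-hand side tends to $0$ as $m\to\infty$ by assumption, which is the claimed conclusion.

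The main obstacle is precisely the uniform control of the right inverses $Q_{\varphi_m}$ and of the Lipschitz constant of $d\mathcal{F}_m$ as $\Omega_m$ shrinks; a priori one might worry that these blow up when the geometry of $\Omega_m$ degenerates. The way around this is that no lower bound on $\mathrm{diam}(\Omega_m)$ is needed: the construction of $Q$ from \cite{BK,ST1} is performed using a globally defined Cauchy--Green kernel restricted to $\Omega_m$, and the constants in the compact embeddings used in the proof of Theorem \ref{theoap} depend only on an upper bound for the diameter, which we have since $\Omega_m\subset\mathbb{D}$. This is exactly why the $\mathcal{C}^{1,\alpha}$-version of the approximation theorem had to be established in place of the $W^{1,p}$-version of \cite{BK}, as remarked before Theorem \ref{theoap}.
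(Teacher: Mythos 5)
Your overall strategy coincides with the paper's: extract an $m$-independent pair $(c_0,\delta)$ and then invoke Theorem \ref{theoap} on each $\Omega_m$. The gap is that the two quantities on which everything hinges --- the uniform bound on $\|Q_{\varphi_m}\|$ and the uniform Lipschitz constant for $d\mathcal{F}_m$ --- are asserted rather than established, and your justifications do not go through as stated. First, $\Omega_m$ is an arbitrary shrinking family (in the application, a thin tubular neighborhood of an arc), so it need not be convex, and the embedding $\mathcal{C}^{1}(\overline{\Omega}_m)\subset\mathcal{C}^{0,\alpha}(\overline{\Omega}_m)$ used in the Lipschitz estimate of Theorem \ref{theoap} does not come with an automatically uniform constant; it requires control of the intrinsic geometry of $\Omega_m$, not just of its diameter. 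Second, your device of ``extending $\varphi_m$ to $\overline{\mathbb{D}}$ by $\varphi$ itself'' only treats the particular restriction $\varphi_m$: the implicit function theorem needs the Lipschitz inequality for \emph{every} $\tilde{\varphi}_m$ in a $\mathcal{C}^{1,\alpha}(\overline{\Omega}_m)$-ball around $\varphi_m$ (these arise from the iteration, which lives entirely on $\Omega_m$ and produces maps that are not restrictions of anything defined on $\mathbb{D}$), and likewise $Q_{\varphi_m}$ must act on arbitrary $g_m\in\mathcal{C}^{0,\alpha}(\overline{\Omega}_m)$. Third, the ``globally defined Cauchy--Green kernel restricted to $\Omega_m$'' does not by itself give uniform $\mathcal{C}^{0,\alpha}\to\mathcal{C}^{1,\alpha}$ bounds: the $\partial$-part of that operator is the Beurling transform of $g_m\chi_{\Omega_m}$, and $g_m\chi_{\Omega_m}$ is not H\"older on $\mathbb{C}$, so one must first extend $g_m$ H\"older-continuously with controlled norm --- which is precisely the point at issue.

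The paper closes all three holes with a single tool you are missing: bounded linear extension operators $E_j\colon\mathcal{C}^{j,\alpha}(\overline{\Omega}_m)\to\mathcal{C}^{j,\alpha}(\overline{\mathbb{D}})$, $j=0,1$, in the sense of Stein, whose norms $K_j$ are independent of $\Omega_m$. With these, $Q_{\varphi_m}$ is defined by extend--solve--restrict (send $g_m$ to $E_0(g_m)$, apply $Q_{\varphi}$ on the full disc, restrict back to $\Omega_m$), giving $\|Q_{\varphi_m}\|\le c_0K_0$; and the Lipschitz estimate on $\Omega_m$ is obtained by extending both $\varphi_m$ and $\tilde{\varphi}_m$ via $E_1$ and applying the estimate already proved on $\mathbb{D}$, at the cost of a factor $K_1^2$. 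The thresholds $\eta$ and $\delta$ are then adjusted by $K_0$ and $K_1$. You should incorporate this (or an equivalent uniform-extension argument) to legitimize the uniformity claims; the remainder of your argument --- contractivity of restriction and passing to the limit --- is fine.
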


\begin{proof}
We need to verify that the constants $c_0>0$ and $\delta>0$ in Theorem 2 can be chosen independently of the sets $\Omega_m$. For this, we need two bounded linear extension operators $E_k\colon \mathcal{C}^{j,\alpha}(\overline{\mathbb{D}})\to \mathcal{C}^{j,\alpha}(\overline{\Omega}_m)$, $j=0,1$ (see e.g. \cite[Theorem 4, p.177]{Stein}). The remarkable fact is that their norms can be bounded by a constant $K_j\geq 1$ that is independent of the sets $\Omega_m$.
 
For points in $\Omega_m$ and $h\in\mathcal{C}^{1,\alpha}(\overline{\mathbb{D}})$ we have 
$d_{\varphi_m}\mathcal{F}_m(h_m)=d_{\varphi}\mathcal{F}(h).$ Hence, one can construct a bounded right inverse $Q_m$ of $d_{\varphi_m}\mathcal{F}_m$ by using the right inverse $Q_{\varphi}$ of $d_{\varphi}\mathcal{F}$. Indeed, given $g_m\in \mathcal{C}^{0,\alpha}(\overline{\Omega}_m)$, we take its extension $g=E_0(g_{m})$ and proclaim $h_m=Q_{\varphi_m}g_m$ to be the restriction of $h=Q_{\varphi}g$ to $\Omega_m$. If $\left\|Q_{\varphi}\right\|<c_0$ and $\left\|\varphi\right\|_{\mathcal{C}^{0,\alpha}(\overline{\mathbb{D}})}<c_0$, the following estimate is valid 
$$\left\|Q_{\varphi_m}g_m\right\|_{\mathcal{C}^{0,\alpha}(\overline{\Omega}_m)}\leq \left\|Q_{\varphi}g\right\|_{\mathcal{C}^{0,\alpha}(\overline{\mathbb{D}})}\leq 
c_0K_0\cdot \left\|g_m\right\|_{\mathcal{C}^{0,\alpha}(\overline{\Omega}_m)}.$$
Moreover, $\left\|\varphi_m\right\|_{\mathcal{C}^{1,\alpha}(\overline{\Omega}_m)}< c_0 \leq c_0 K_0.$  

Further, given $m\in\mathbb{N}$ let $\varphi_m,\tilde{\varphi}_m\in \mathcal{C}^{1,\alpha}(\overline{\Omega}_m)$ be such that 
$$\left\|\tilde{\varphi}_m-\varphi_m\right\|_{\mathcal{C}^{1,\alpha}(\overline{\Omega}_m)}<\frac{1}{K_1}.$$
For $\varphi=E_1(\varphi_{m})$ and $\tilde{\varphi}=E_1(\tilde{\varphi}_{m})$ we have 
$\left\|\tilde{\varphi}-\varphi\right\|_{\mathcal{C}^{1,\alpha}(\overline{\mathbb{D}})}<1.$
Hence as in $(\ref{eqlip})$ we can conclude that $$\left\|d_{\tilde{\varphi}_m}\mathcal{F}_m(h)-d_{\varphi_m}\mathcal{F}_m(h)\right\|_{\mathcal{C}^{0,\alpha}(\overline{\Omega}_m)}
\leq cK_1^2\|\tilde{\varphi}_m-\varphi_m\|_{\mathcal{C}^{1,\alpha}(\overline{\Omega}_m)}\|h\|_{\mathcal{C}^{1,\alpha}(\overline{\Omega}_m)}.$$
We now get the desired result from Theorem 2 by setting 
$$\eta=\min\left\{\frac{1}{K_1},\frac{1}{2cc_0K_0K_1^2}\right\}\;\;\textrm{and}\;\;\displaystyle \delta=\frac{\eta}{4c_0K_0}.$$ 
\end{proof}
We now prove a local approximation statement that will be used in the proof of the main theorem. We mimic the proof of \cite[Lemma 3.5]{CHAK}.
\begin{theorem}\label{arc approximation}
Let $J\in\mathcal{J}$. Given $\epsilon>0$, a smoothly embedded arc $\Gamma\subset\mathbb{C}$ and a $\mathcal{C}^2$-map $\varphi\colon \Gamma\to\mathbb{R}^{2n}$, there exists a neighborhood $U$ of $\Gamma$ and a $J$-holomorphic map $u\colon U\to \mathbb{R}^{2n}$ such that
$\left\|u-\varphi\right\|_{\mathcal{C}^{1,\alpha}(\Gamma)}<\epsilon.$
\end{theorem}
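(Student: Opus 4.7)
The plan is to construct a $\mathcal{C}^2$-extension $\tilde{\varphi}$ of $\varphi$ to a neighborhood of $\Gamma$ whose defect $\mathcal{F}(\tilde{\varphi})$ vanishes identically on $\Gamma$, and then to apply Proposition 4 on a shrinking family of tubular neighborhoods of $\Gamma$. A preliminary affine change of coordinates in $\mathbb{C}$ reduces the situation to $\Gamma\subset\mathbb{D}$.

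Working locally in coordinates in which $\Gamma$ lies on the real axis, one prescribes a $2$-jet along $\Gamma$ consisting of value $\varphi(x)$, normal derivative $J(\varphi)\varphi'(x)$ (so that (3) forces $\mathcal{F}(\tilde{\varphi})|_\Gamma=0$), second tangential derivative $\varphi''(x)$, and the remaining second derivatives chosen continuously (for instance, second normal derivative equal to $0$). Since $\varphi\in\mathcal{C}^2$ and $J$ is smooth, this jet satisfies the Whitney compatibility conditions, and Whitney's extension theorem produces $\tilde{\varphi}\in\mathcal{C}^2(U_0)$ on some tubular neighborhood $U_0\supset\Gamma$. Multiplying by a cut-off that equals $1$ on a smaller tube $V_0\supset\Gamma$ and is supported in $U_0$ yields $\Phi\in\mathcal{C}^{1,\alpha}(\overline{\mathbb{D}})$ that agrees with $\tilde{\varphi}$ on $V_0$; in particular, $\mathcal{F}(\Phi)=\mathcal{F}(\tilde{\varphi})$ there is of class $\mathcal{C}^1$ and vanishes on $\Gamma$.

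Now let $\Omega_m\subset V_0$ be the tubular neighborhood of $\Gamma$ of width $1/m$. Then $\|\mathcal{F}(\Phi)\|_{\mathcal{C}^0(\overline{\Omega}_m)}=O(1/m)$ while $\|\mathcal{F}(\Phi)\|_{\mathcal{C}^1(\overline{\Omega}_m)}$ remains uniformly bounded, so the standard interpolation estimate
$$\|f\|_{\mathcal{C}^{0,\alpha}}\leq C\,\|f\|_{\mathcal{C}^0}^{1-\alpha}\,\|f\|_{\mathcal{C}^1}^{\alpha}$$
yields $\|\mathcal{F}_m(\Phi_m)\|_{\mathcal{C}^{0,\alpha}(\overline{\Omega}_m)}\to 0$. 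Proposition 4 then furnishes, for $m$ large, a $J$-holomorphic map $u_m\colon\Omega_m\to\mathbb{R}^{2n}$ satisfying $\|u_m-\Phi_m\|_{\mathcal{C}^{1,\alpha}(\overline{\Omega}_m)}\to 0$; restricting to $\Gamma$, where $\Phi=\varphi$, gives $\|u_m-\varphi\|_{\mathcal{C}^{1,\alpha}(\Gamma)}\to 0$, and a sufficiently large $m$ provides the desired pair $(u,U)=(u_m,\Omega_m)$.

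The main technical obstacle is verifying that Proposition 4 applies with constants independent of $m$. The critical requirement is that the Stein-type extension operators $E_j\colon\mathcal{C}^{j,\alpha}(\overline{\Omega}_m)\to\mathcal{C}^{j,\alpha}(\overline{\mathbb{D}})$ used in the proof of Proposition 4 have norms bounded uniformly in $m$, which holds because the $\Omega_m$ form a family of uniformly Lipschitz domains — tubular neighborhoods of a fixed smoothly embedded compact arc. The remaining hypotheses of Proposition 4 (existence of a right inverse $Q_\Phi$ with controlled norm, uniform bound on $\|\Phi\|_{\mathcal{C}^{1,\alpha}(\overline{\mathbb{D}})}$) follow routinely from the construction of $\tilde{\varphi}$ and the existence result cited in Theorem 2.
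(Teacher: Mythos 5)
Your proposal follows essentially the same route as the paper: extend $\varphi$ off $\Gamma$ so that the $\bar{\partial}_J$-defect vanishes along the arc, work on the shrinking tubes $\Omega_m=\{\mathrm{dist}(\cdot,\Gamma)<1/m\}$, and invoke Proposition 4. The only (harmless) deviation is that the paper uses an explicit extension quadratic in $y$ whose defect vanishes to first order along $\Gamma$, giving the $\mathcal{C}^{0,\alpha}$ decay directly, whereas you kill the defect only to zeroth order and recover the $\mathcal{C}^{0,\alpha}$ smallness via Whitney extension plus the interpolation inequality.
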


\begin{proof}
Without loss of generality we can assume that $\Gamma\subset\mathbb{D}\cap\mathbb{R}$. By (3) the $J$-holomorphicity condition equals  $u_y=J(u)u_x$. Hence, we may extend $\varphi$ to a function that is quadratic in $y$ and whose $\bar{\partial}_J$-derivative vanishes up to the first order along $\Gamma$. In particular, for 
$$\Omega_m=\left\{z\in\mathbb{C}:\; \textrm{dist}(z,\Gamma)<\frac{1}{m}\right\}$$
there exist $m_0>0$ and $C_\alpha>0$ such that $\Omega_m\subset\mathbb{D}$ and  
$$\left\|(\varphi_m)_{\bar{\zeta}} +A(\varphi_m)\overline{(\varphi_m)_\zeta}\right\|_{\mathcal{C}^{0,\alpha}(\overline{\Omega}_m)}< \frac{C_\alpha}{m}$$
for every $m\geq m_0$. The rest follows from the Proposition 4 above.
\end{proof}

\section{The Runge-type approximation}

As mentioned in the introduction, our result is an application of the following Runge-type theorem provided by A. Gournay \cite{Gournay}. 
\begin{theorem}\label{Runge}
Let $M$ be a smooth compact manifold equipped with a regular almost complex structure and admitting a doubly tangent property.
Suppose we are given $\epsilon>0$, a compact Riemann surface $\Sigma$, an open set $U\subset\Sigma$, a $J$-holomorphic map $\varphi\colon U\to M$ and a compact set $K\subset U$. Then, provided that there is a $\mathcal{C}^0$ extension of $\varphi$ to $\Sigma$, there exists a $J$-holomorphic map $u\colon \Sigma\to M$ such that $\left\|u-\varphi\right\|_{\mathcal{C}^0(K)}<\epsilon$ .\end{theorem}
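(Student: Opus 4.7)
The plan is to build $u$ in two stages: first assemble an approximately $J$-holomorphic global map $\psi:\Sigma\to M$ that agrees with $\varphi$ on $K$, then correct $\psi$ to a genuine $J$-holomorphic map via a Banach-space implicit function theorem in the spirit of Theorem~2. I would start with the hypothesised $\mathcal{C}^0$-extension $\widetilde\varphi$ of $\varphi$ to $\Sigma$ and smooth it to a $\mathcal{C}^\infty$-map $\psi_0$ coinciding with $\varphi$ on a compact neighbourhood $K'\subset U$ of $K$; then $\bar\partial_J\psi_0$ vanishes on $K'$ and is merely smooth (not small) on $\Sigma\setminus K'$.

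To shrink $\bar\partial_J\psi_0$ off $K'$, I would cover $\Sigma\setminus K'$ by finitely many small coordinate discs $D_i\subset\Sigma$ and, on each $D_i$, invoke the doubly tangent property to produce a $J$-holomorphic patch $v_i:D_i\to M$ whose $1$-jet at the centre of $D_i$ matches that of $\psi_0$. Since the doubly tangent property supplies $J$-holomorphic spheres (or curves) through prescribed pairs of points with prescribed tangents, a perturbation argument of the sort used in Theorem~5 (applied in a local chart on $M$) should force $v_i$ to be $\mathcal{C}^{1,\alpha}$-close to $\psi_0|_{D_i}$ as $\mathrm{diam}(D_i)$ shrinks. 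Gluing $\psi_0|_{K'}$ with the $v_i$ via a partition of unity subordinate to $\{K'\}\cup\{D_i\}$ then delivers a smooth $\psi:\Sigma\to M$ with $\psi=\varphi$ on $K$ and $\|\bar\partial_J\psi\|_{\mathcal{C}^{0,\alpha}(\Sigma)}<\delta$ for any preassigned $\delta>0$.

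For the correction step, regularity of $(M,J)$ is precisely what one needs so that the linearisation of $\bar\partial_J$ at $\psi$, viewed as an operator between $\mathcal{C}^{1,\alpha}$ and $\mathcal{C}^{0,\alpha}$ sections on $\Sigma$, admits a bounded right inverse $Q_\psi$ whose norm is uniformly controlled in a $\mathcal{C}^{1,\alpha}$-neighbourhood of our patched map. Applying the implicit function theorem (Theorem~2) to the $\bar\partial_J$-operator at $\psi$ then yields a true $J$-holomorphic $u:\Sigma\to M$ with $\|u-\psi\|_{\mathcal{C}^{1,\alpha}(\Sigma)}\leq 2\|Q_\psi\|\,\delta$, so choosing $\delta$ sufficiently small gives $\|u-\varphi\|_{\mathcal{C}^0(K)}<\epsilon$.

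The main obstacle is the second step: the doubly tangent property only pins down $J$-holomorphic behaviour at isolated points, so making each patch $v_i$ globally $\mathcal{C}^{1,\alpha}$-close to $\psi_0$ across an entire disc $D_i$ requires both shrinking $D_i$ and running a delicate jet-interpolation together with a local implicit-function perturbation. A more subtle point is that the glued map $\psi$ must lie in the regular locus \emph{uniformly} as $\delta\to 0$, so that $\|Q_\psi\|$ can be bounded independently of the approximation scale; this is where the regularity hypothesis on $J$ must do its real work, and where the technical heart of Gournay's construction lies.
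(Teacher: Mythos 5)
Your two-stage outline (patch to an approximately $J$-holomorphic global map, then correct) matches the shape of Gournay's argument, which the paper only sketches (Theorem~\ref{Runge} is quoted from \cite{Gournay}, not reproved). But the middle step of your proposal fails for a concrete reason. A $J$-holomorphic patch $v_i$ on $D_i$ satisfies $\bar\partial_J v_i=0$, so its local expansion at the centre is $az+O(|z|^2)$ with \emph{no} antiholomorphic term; you therefore cannot match the $1$-jet of $\psi_0$ at a point where $\bar\partial_J\psi_0\neq 0$, which is precisely where a correction is needed. This is the whole point of the doubly tangent property: it supplies global spheres $H^r_{a,b}$ with Laurent tail $br^2/z$, which on the circle $|z|=r$ coincides with $b\bar z$ (since $r^2/z=\bar z$ there), so the sphere agrees with $az+b\bar z$ on that circle up to $O(r^{1+\epsilon})$ while remaining holomorphic inside. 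The surgery then replaces $\varphi$ on the small disc by an entire sphere in $M$; the result is in no sense $\mathcal{C}^{1,\alpha}$-close (or even $\mathcal{C}^0$-close) to $\psi_0$ on $D_i$, and the grafting must be iterated finitely many times to drive the error down. Your partition-of-unity gluing also loses a factor $1/\mathrm{diam}(D_i)$ from the cutoff derivatives, so jet-matching only at the centre would not make $\|\bar\partial_J\psi\|_{\mathcal{C}^{0,\alpha}}$ small in any case.

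The correction step as you set it up is also explicitly unavailable here. Each graft increases $\|d\varphi\|$ (in $L^p$, and likewise in H\"older norms) by an a priori significant amount, and the number of grafts is not bounded; consequently the local Lipschitz constant of $D_\varphi$ grows with $\partial\varphi$ and the quadratic estimate required by Theorem~2 fails. Gournay has to replace the H\"older/Sobolev framework by Taubes' sup-norms and a bespoke iteration scheme in which the local linear problems on the grafted spheres (where regularity of $J$ guarantees solvability) interact with the problem on $\Sigma_0=\Sigma$. So the ``main obstacle'' you flag at the end is real, but it is not resolved by shrinking $D_i$ and bounding $\|Q_\psi\|$ uniformly in $\mathcal{C}^{1,\alpha}$; the functional-analytic setting itself has to change.
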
  
\noindent We include below a brief discussion on the proof in order to explain why the statement can be applied in the present case.

Provided that there are no topological obstructions, one can define a new map extending the initial data $\varphi|_K$ to $\Sigma$ in a $\mathcal{C}^{\infty}$-fashion. Let us denote it by $\varphi$ again. Of course such a map needs not to be holomorphic on $\Sigma\setminus K$ and one can express this locally. Fix $\zeta_0\in\Sigma\setminus K$ and the following two charts: a chart $\psi$ on $\Omega_0\subset\Sigma$ with $\psi(\zeta_0)=0$ and a chart $\phi$ on $M$ taking $q=\varphi(\zeta_0)\in M$ to $0\in\mathbb{R}^{2n}$ and satisfying $\phi^*(J)(0)=J_{st}$. There exist $a,b\in\mathbb{R}^{2n}$ such that
$$\psi\circ \varphi\circ \phi^{-1} (z)= az+b\bar{z}+O(|z|^2),$$
where $b\neq 0$ is equivalent to $\bar{\partial}_J\varphi\neq 0$. 

First of the two key assumptions in the method of A. Gournay is that the manifold enjoys the \textit{doubly tangent property}. That is, for almost every $q\in M$ and almost every pair $a,b\in\mathbb{R}^{2n}$, there exists a $J$-holomorphic sphere $H_{a,b}^r\colon \mathbb{C}P^1\to M$ whose local (Laurent) expansion equals to 
$$\psi\circ H_{a,b}^r\circ \phi^{-1}(z)=az+br^2/z+O(r^{1+\epsilon}).$$ 
Here, $\epsilon>0$ and $r>0$ are such that for $\frac{r}{(1+r^{\epsilon})}<|z|<r(1+r^{\epsilon})$ we have $\psi^{-1}(z)\in \Omega_0.$ Hence, using an appropriate cut-off function, the map $\varphi$ may be replaced by $H_{a,b}^r$ in the vicinity of $\zeta_0$. Moreover, since $H_{a,b}^r$ is holomorphic and almost agrees with $\varphi$ on $|z|=r$, such a surgery diminishes the 'size' of the $\bar{\partial}_J$-derivative. The author calls such a procedure \textit{grafting} and he repeats it finitely many times until reaching the desired bounds (see \cite[\S 3.1]{Gournay}). 

Once an approximate solution is constructed (we denote it by $\varphi\colon \Sigma\to M$ again), the  $\bar{\partial}_J$-equation can be solved similarly as in our \S 1. Let us briefly explain this. The non-linear $\bar{\partial}_J$-operator, now defined globally, may be linearized at a compact curve $u$ so that the corresponding Fredholm operator $D_{u}$ maps from $\mathcal{C}^{\infty}(\Sigma,\varphi^*TM)$ to $\mathcal{C}^{\infty}(\Sigma,\Lambda^{0,1}\varphi^*TM)$ (see \cite[(3.1.4.)]{McDuff}). The notion of regularity refers to its surjectivity. In particular, for us the structure $J$ is \textit{regular} when $D_u$ is onto for every $J$-holomorphic sphere. The idea is to find a bounded right inverse for $D_\varphi$. That is, given $\eta\in\mathcal{C}^{\infty}(\Sigma,\Lambda^{0,1}\varphi^*TM)$ we seek solution $\xi\in \mathcal{C}^{\infty}(\Sigma,\varphi^*TM)$ of $D_{\varphi}\xi=\eta$ with bounds. 

In the cited paper the latter is obtained in two steps. Firstly, it is shown in \cite[\S 3.2]{Gournay} that it suffices to solve local equations $D_{\varphi_j}\xi_j=\eta_j$, where $\left\{(\varphi_j, \xi_j, \eta_j)\right\}_{j\geq 0}$ stands for slightly perturbed data $(\varphi, \xi, \eta)$ restricted either to the original surface $\Sigma_0=\Sigma$ or to one of the finitely many grafts $\Sigma_j=\mathbb{C}P^1$, $j>0$. Secondly, it is proved in \cite[\S 3.3]{Gournay} that though the local equations for $j>0$ interact with the one for $j=0$ the iteration starting at $\xi_0=0$ is indeed contractible. Here the regularity of the structure is crucial since it ensures that the local equation is always solvable along the grafts. In contrast, the inversion of the linear equation for $j=0$, is very subtle \cite[\S 3.3]{Gournay}.  

Finally, it is worth mentioning that the norms in question are not the ones associated with Sobolev or H\"older spaces. The reason lies in the fact that each graft increases the $L^p$ norm of $d\varphi$ by a quantity that is a priori significant. Furthermore, the number of surgeries is not bounded in general. Hence, the local Lipschitz constant grows with $\partial\varphi$ when $D_{\varphi}$ is treated as a map from $W^{1,p}(\Sigma,\Lambda^{0,1}\varphi^*TM)$ to $L^{p}(\Sigma,\varphi^*TM),$ $p>2$. This makes it impossible to use the Implicit function theorem. Hence a certain sup-norm introduced by C. Taubes \cite{Taubes} is used (see also \cite[\S 4]{Donaldson}).

We now state the corollary that will be used in the proof of Theorem $1$. 
\begin{coro}\label{Runge coro}
Let $(M,J)$ be as in Theorem $6$. Let $K\subset\partial\D$ be a compact set and let the map $\varphi$ be continuous near $\overline{\mathbb{D}}$ and $J$-holomorphic near $K\cup\left\{0\right\}$. Given $\epsilon>0$, there exists $u\in \cO(\clD,M,\varphi(0))$ such that $\left\|u-\varphi\right\|_{\mathcal{C}^0(K)}<\epsilon$. 
\end{coro}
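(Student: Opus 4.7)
The plan is to apply Gournay's Theorem \ref{Runge} on the closed Riemann surface $\Sigma = \mathbb{C}P^1 \supset \overline{\mathbb{D}}$ to obtain a global $J$-holomorphic approximation of $\varphi$, and then to perform a local correction at the origin that upgrades $\tilde u(0) \approx p$ to the exact equality $u(0) = p := \varphi(0)$.

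First I would choose an open set $V \subset \mathbb{C}$ containing $K \cup \{0\}$ on which $\varphi$ is $J$-holomorphic, and extend $\varphi$ continuously from a neighborhood of $\overline{\mathbb{D}}$ to all of $\mathbb{C}P^1$ via the reflection $\zeta \mapsto 1/\bar\zeta$, which identifies the complementary hemisphere with $\overline{\mathbb{D}}$ along $\partial\mathbb{D}$ and sends $\infty$ to $p$. Theorem \ref{Runge} applied with $U = V$ and compact set $K' = K \cup \{0\}$ yields, for any prescribed $\epsilon_0 > 0$, a $J$-holomorphic map $\tilde u \colon \mathbb{C}P^1 \to M$ with $\|\tilde u - \varphi\|_{\mathcal{C}^0(K')} < \epsilon_0$. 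Its restriction to $\overline{\mathbb{D}}$ is then a $J$-holomorphic disc with $q := \tilde u(0)$ satisfying only $|q - p| < \epsilon_0$.

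Next, in a chart $\phi$ about $p$ with $\phi_*J(0) = J_{\mathrm{st}}$, and for a fixed small $r > 0$ such that $\tilde u$ maps $\overline{\mathbb{D}(0,r)}$ into the chart (which holds once $\epsilon_0$ is small enough), I would set
\begin{equation*}
v(\zeta) = \tilde u(\zeta) + \chi(\zeta/r)\bigl(p - q\bigr),
\end{equation*}
where $\chi \in \mathcal{C}^\infty_c(\mathbb{D})$ is a cut-off with $\chi(0) = 1$. Then $v(0) = p$, $v \equiv \tilde u$ outside $\mathbb{D}(0,r)$, and a direct computation using $\mathcal{F}(\tilde u) = 0$ and $|p - q| \lesssim \epsilon_0$ yields $\|\mathcal{F}(v)\|_{\mathcal{C}^{0,\alpha}(\overline{\mathbb{D}})} = O(\epsilon_0)$, with a constant depending on $\chi$, $r$, and uniform bounds on $\tilde u$ and $A$, but not on $\epsilon_0$.

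Finally I would deform $v$ to a genuine $J$-holomorphic disc by Theorem \ref{theoap}, but with the standard right inverse $Q_v$ of $d_v\mathcal{F}$ replaced by the modified operator $\tilde Q_v(g) := Q_v(g) - L(g)$, where $L \colon \mathcal{C}^{0,\alpha}(\overline{\mathbb{D}}) \to \ker d_v\mathcal{F}$ is a bounded linear operator satisfying $L(g)(0) = Q_v(g)(0)$. The existence of such $L$ reduces to the surjectivity, with a bounded right inverse, of the evaluation map $\mathrm{ev}_0 \colon \ker d_v\mathcal{F} \to \mathbb{R}^{2n}$; this follows from the fact that on the disc $d_v\mathcal{F}$ is a first-order elliptic operator of $\bar\partial$-type whose kernel is infinite-dimensional and contains $\mathcal{C}^{1,\alpha}$-elements with arbitrary prescribed values at the origin. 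Rerunning the proof of Theorem \ref{theoap} with $\tilde Q_v$ in place of $Q_v$ produces $u$ with $\mathcal{F}(u) = 0$, $(u - v)(0) = 0$, and $\|u - v\|_{\mathcal{C}^{1,\alpha}(\overline{\mathbb{D}})} = O(\epsilon_0)$. Thus $u(0) = p$, and choosing $\epsilon_0$ small enough gives $\|u - \varphi\|_{\mathcal{C}^0(K)} < \epsilon$. The main technical obstacle is the construction of $L$, namely the bounded solvability of the linearized $\bar\partial_J$-equation subject to a prescribed value at the interior point $\zeta = 0$.
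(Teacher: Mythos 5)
Your first step coincides with the paper's: extend $\varphi$ continuously to $\Sigma=\mathbb{C}P^1$ and feed it to Theorem \ref{Runge}. But the two arguments then part ways. The paper does \emph{not} post-process the output of Gournay's theorem; it goes inside Gournay's proof, inserts an ``unnecessary graft'' $H^r_{a,b}$ at the origin (where $b=0$ because $\bar\partial_J\varphi(0)=0$, so the graft already passes through $\varphi(0)$), and imposes the pointwise constraint $\xi(0)=0$ in every local linear solve $D_{\varphi_1}\xi_1=\eta_1$ of the iteration --- regularity of $J$ keeps the constrained operator surjective (this is the transversality-with-point-constraints argument of McDuff--Salamon, \S 3.4), so Gournay's estimates survive unchanged. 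Your modified right inverse $\tilde Q_v=Q_v-L$ is morally the same device, but you deploy it a posteriori, in the framework of Theorem \ref{theoap}, and that is where the gaps appear.

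Concretely: Theorem \ref{theoap} is a one-chart statement --- the operator $\mathcal{F}(u)=u_{\bar\zeta}+A(u)\overline{u_\zeta}$ only makes sense for maps into $\mathbb{R}^{2n}$ carrying a structure $J\in\mathcal{J}$ with a globally defined complex matrix $A$. The disc $\tilde u$ produced by Theorem \ref{Runge} maps $\overline{\mathbb{D}}$ into the compact manifold $M$, and its image will in general not lie in a single such chart; so $\mathcal{F}(v)$, $d_v\mathcal{F}$ and $Q_v$ are not defined objects in the paper's setting, and your Newton correction (which is global on $\overline{\mathbb{D}}$, since $Q_v$ is nonlocal even though $\mathcal{F}(v)$ is supported in $\mathbb{D}(0,r)$) would require linearizing along $\tilde u^*TM$ and constructing a bounded right inverse for the resulting operator on a surface with boundary --- machinery the paper neither provides nor needs. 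Second, the surjectivity of $\mathrm{ev}_0$ on $\ker d_v\mathcal{F}$ together with a bounded linear section, which you correctly identify as the crux, is asserted rather than proved: because of the zeroth-order term $d_vA(h)\overline{v_\zeta}$ the constants are not in the kernel, so one needs either a matrix similarity principle or a perturbation argument showing that $c\mapsto\bigl(c+Q_v(-d_v\mathcal{F}(c))\bigr)(0)$ is invertible. Both points are plausibly repairable, but as written the second half of your argument does not close; the paper avoids the issue entirely by enforcing $u(0)=p$ where the functional-analytic framework (Gournay's Taubes-norm iteration) already exists.
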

\begin{proof}
First note that the map $\varphi$ can be continuously extended to $\Sigma=\mathbb{C}P^1$. Hence, the Runge-type theorem guarantees existence of a $J$-holomorphic map approximating $\varphi$ on $K\cup\left\{0\right\}$. It remains to explain why the above proof can be adopted slightly in order to obtain $u(0)=\varphi(0)$. 

The simplest way to do this is by adding an 'un-necessary graft' at the center. That is, we start by replacing the map $\varphi$ with an appropriate graft $H_{a,b}^r$ near $0\in\mathbb{D}$. Since $\bar{\partial}_J\varphi(0)=0$ we have $b=0$ and $\varphi(0)=H_{a,b}^r(0)$ here. We index this graft with $j=1$ and then proceed with the usual grafting procedure for $j>1$. Moreover, we add a point-wise restriction $\xi(0)=0$ each time when solving the local linear equation $D_{\varphi_1}\xi_1=\eta_1$. Since $J$ is regular this does not object the surjectivity of $D_{\varphi_1}$. Indeed, check \cite[\S 3.4]{McDuff}. Hence, all the key estimates remain fulfilled. In particular, \cite[Corollary 2.5.5.]{Gournay} can be used in the iterative scheme from \cite[\S 3.3]{Gournay}. 
\end{proof}

\section{Proof of Theorem 1}
The direct construction of a Poletsky disc follows from Theorem \ref{arc approximation} and Corollary \ref{Runge coro}. 
Indeed, as in \cite{Rosay3}, we prove the following stronger statement.  
\begin{theorem}
Let $(M,J)$ be as in Theorem 1 and equipped with some Riemannian metric. Given a point $p\in M$, a positive constant $\epsilon>0$ and a $\mathcal{C}^2$-map $\lambda\colon \partial\mathbb{D}\to M$, there exist a disc $u\in\cO(\clD,M,p)$ and a set $E\subset[0,2\pi)$ such that $|E|<\epsilon$ and $\textrm{dist}(u(e^{it}),\lambda(e^{it}))<\epsilon$ for $t\in [0,2\pi)\setminus E$.  
\end{theorem}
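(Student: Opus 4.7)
The plan is to reduce the statement to Corollary~\ref{Runge coro} by building a continuous map $\varphi$ on a neighborhood of $\overline{\mathbb{D}}$ that is $J$-holomorphic on a neighborhood of $K\cup\{0\}$, where $K\subset\partial\mathbb{D}$ is a compact set with $|\partial\mathbb{D}\setminus K|<\epsilon$, and satisfies $\varphi(0)=p$ together with $\dist(\varphi(e^{it}),\lambda(e^{it}))<\epsilon/2$ for $e^{it}\in K$. Corollary~\ref{Runge coro} then produces a disc $u\in\cO(\clD,M,p)$ with $\|u-\varphi\|_{\mathcal{C}^0(K)}<\epsilon/2$, and the statement follows by the triangle inequality upon setting $E:=\{t\in[0,2\pi)\colon e^{it}\notin K\}$.

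To construct $\varphi$, I would first subdivide $\partial\mathbb{D}$ into closed arcs $\Gamma_1,\ldots,\Gamma_N$ small enough that each image $\lambda(\Gamma_i)$ fits inside a chart on $M$ in which coordinates are normalized so that $J(0)=J_{st}$ (hence $J\in\mathcal{J}$ locally). Shrink each $\Gamma_i$ to a closed sub-arc $\Gamma_i'$ away from its endpoints and set $K:=\bigsqcup_{i=1}^N\Gamma_i'$, arranging the open gap arcs to have total length less than $\epsilon$. Theorem~\ref{arc approximation} applied in the chosen chart yields a $J$-holomorphic map $u_i$ on an open neighborhood $V_i\subset\mathbb{C}$ of $\Gamma_i'$; the $V_i$ can be taken pairwise disjoint, avoiding a closed disc $\{|z|\leq r_0\}$ about the origin, and such that $\dist(u_i(e^{it}),\lambda(e^{it}))<\epsilon/2$ on $\Gamma_i'$. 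Declare $\varphi:=u_i$ on $V_i$, $\varphi\equiv p$ on $\{|z|\leq r_0\}$ (trivially $J$-holomorphic), and extend $\varphi$ continuously to the remainder of $\overline{\mathbb{D}}$.

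The delicate point is this final continuous extension: the boundary values $\varphi|_{\partial\mathbb{D}}$, assembled from the prescribed pieces $u_i|_{\Gamma_i'}$ together with arbitrarily chosen paths on the $N$ complementary open arcs, form a loop in $M$ which must be null-homotopic so that it can be capped off by a disc with $\varphi(0)=p$. I would prescribe paths on $N-1$ of the gap arcs using path-connectedness of $M$, and then use the freedom on the remaining gap arc to realize any element of $\pi_1(M,\lambda(1))$ (between two fixed endpoints every path may be altered by precomposition with a based loop) so as to cancel the total homotopy class of the assembled loop. Once the boundary is null-homotopic, contractibility of $\overline{\mathbb{D}}$ supplies the desired continuous cap without disturbing the already established $J$-holomorphy near $K\cup\{0\}$. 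This $\pi_1$-cancellation, reconciling the global continuous assembly with the prescribed centre value, is the main obstacle; the rest is a direct application of Theorem~\ref{arc approximation} and Corollary~\ref{Runge coro}.
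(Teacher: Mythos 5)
Your proposal is correct and follows essentially the same route as the paper: cover $\partial\mathbb{D}$ by arcs lying in coordinate charts, apply the local arc approximation (Theorem \ref{arc approximation}) on pairwise disjoint neighbourhoods of slightly shrunk sub-arcs whose complement has measure less than $\epsilon$, glue everything into a continuous map on a neighbourhood of $\overline{\mathbb{D}}$ with the prescribed centre value, and conclude with Corollary \ref{Runge coro} applied to $K=\bigcup_j\Gamma_{t_j}$. The only deviations are minor: the paper places a small nonconstant Nijenhuis--Woolf disc at the origin where you use the constant map $p$, and it dispatches the gluing step with the single phrase ``since $M$ is connected,'' whereas you spell out the $\pi_1$-cancellation needed to cap off the assembled boundary loop --- a point the paper arguably under-justifies and which your version handles correctly.
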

\begin{proof}
As pointed out above the direct method consists of two steps. We first make a piece-wise holomorphic approximation of $\lambda$. Then we use the Runge-type theorem to extend this map to the whole disc. The second step can be understood as adding finitely many poles (grafts).   

Fix $e^{it}\in\partial\D$. Let $\phi_t\colon V_t\to \mathbb{B}$ be a local chart mapping a neighborhood of $\lambda(e^{it})$ into a neighborhood of the origin in $\mathbb{R}^{2n}$ and satisfying $\phi_t^*(J)\in\mathcal{J}$. We define $\Gamma'_t\subset\partial\D$ to be the largest connected subarc including $e^{it}$ and satisfying  $\lambda(\Gamma'_t)\subset\lambda(\partial\D)\cap V_t$. By compactness, there are points $t_1,\ldots,t_k\in[0,2\pi)$ such that the the union $\cup_{j=1}^k \Gamma'_{t_j}$ covers the whole $\partial\D$. Moreover, we can choose smaller pairwise disjoint subarcs $\Gamma_{t_j}\subset\subset\Gamma_{t_j}'$ satisfying 
$$|\partial\D\setminus\cup_{j=1}^k \Gamma_{t_j}|<\epsilon.$$ 

By Theorem \ref{arc approximation} there exist $J$-holomorphic maps $u_j\colon U_j\to V_{z_j}$ that are defined on pairwise disjoint neighborhoods and $\mathcal{C}^{1,\alpha}$-close to $\phi_j\circ\lambda$ on $\Gamma_{t_j}.$ Moreover, by the classical Nijenhuis-Woolf theorem \cite{NW} there exists a small $J$-holomorphic disc $u_0$ centered at $p$. Since $M$ is connected we can join these pieces into a continuous map $\varphi$ defined on a neighborhood of $\overline{\mathbb{D}}$ and satisfying $\varphi(0)=u_0(0)=p$. The rest follows from the Corollary \ref{Runge coro} applied to the compact set $K=\cup_{j=1}^k \Gamma_{t_j}$.
\end{proof}


\thanks{The research of the author was supported in part by the research program P1-0291 and the grant J1-7256 from ARRS, Republic of Slovenia. A large part of the result was created during his stay at the University of Oslo, Spring 2017. He wants to thank prof. Erlend F. Wold for his hospitality. He also wants to thank prof. Barbara Drinovec-Drnov\v sek for her useful remarks on an earlier version of the paper.}

\end{document}